\documentclass{article}
\usepackage{amsmath,amssymb,amsthm,stmaryrd,makeidx}
\usepackage[all]{xy}




\DeclareMathOperator{\enm}{End}

\DeclareMathOperator{\im}{im}

\DeclareMathOperator{\hmm}{Hom}
\DeclareMathOperator{\mor}{Mor}

\DeclareMathOperator{\pts}{pts}


\newcommand{\cat}[1]{\mathbf{#1}}

\newcommand{\sets}{\mathbf{Sets}}

\newcommand{\grmcat}[1]






\input xypic

\newtheorem{proposition}{Proposition}

\newtheorem{lemma}{Lemma}

\newtheorem{definition}{Definition}

\makeindex

\begin{document}
\author{Arvid Siqveland}
\title{Localization in Associative Rings}

\maketitle

\begin{abstract} In \cite{S251} we constructed schemes of objects in small categories which contained a set of basepoints with local representing (localizing) objects. Here we prove that the category $\cat{Rings}$ of associative rings with unit has a certain set of basepoints for which localizing rings exist. We take the set of base points $B$ to be the set of rings on the form $\enm_{\mathbb Z}(M)$ where $M$ is a simple right $A$-module for some associative ring $A.$ The set of base-points in the associative ring $A$ is defined as $\pts_B(A)=\{\mor_{\cat{Rings}}(A,\enm_{\mathbb Z}(M))\}.$ For  any finite subset $M\subseteq\pts_B(A)$ we prove that the localizing ring $A_M$ exists.  and so the construction from \cite{S251} gives a definition of schemes of associative algebras. Defining a topology on $\pts_B(A)$ such that when $A$ is commutative it is the Zariski topology, we get the ordinary definition of schemes when we consider the category of commutative rings. This article is in line with the philosophy that a scheme is a moduli of its base-points.
\end{abstract}

\section{Introduction}

In constructing algebraic moduli varieties, the concept of localization is important. Defining moduli varieties by deformation theory, we observe that the defined local function rings are close to the usual localization in the commutative case, and so can be used to define local function rings in the noncommutative situation. These rings are not necessarily local, but we will never the less call them localizations, local representing rings in line with \cite{S251}, or we will call these rings localizing.

We constructed associative varieties over a field $k$ by deformation theory. This says that we observed that we can replace naturally the product  $\prod_{i=1}^r A_{\mathfrak m_i}$ of $r$ localizations in maximal ideals in the commutative situation, with a particular subring of a complete matrix algebra $\hat O^A_M=(H_{ij}\otimes_k\hmm_k(M_i,M_j))$ constructed from a direct sum of $r\geq 1$ simple $A$-modules $M=\oplus_{i=1}^rM_i.$ The matrix ring $H=(H_{ij})_{r\times r}$ is the noncomutative prorepresenting hull, also called the noncommutative local formal moduli, of the deformation functor of the set $M$ of $r\geq 1$ given modules. The associative construction should extend the commutative construction of varieties, at least in some particular cases. 

In the commutative situation, we can replace the prorepresening hull of one simple module by the completion of the $A_{\mathfrak m}$ where $\mathfrak m$ is the maximal ideal corresponding to the simple module $A/\mathfrak m,$ and this leads to the local ring $(A/\ker\kappa)_{\mathfrak m}$ where $\kappa:A\rightarrow \hat A_{\mathfrak m}$ is the natural morphism.

Our goal in this text is to prove that we can give a definition of localization of associative rings, based only on the theory of associative rings. Thus we need both localization (which is not necessarily local) and completion in the associative case. Deformation theory can then be applied for actual computation of the localizing rings, which is of course essential for obtaining result on the algebraic objects under study, i.e. their moduli spaces, see \cite{S242}.

This leads to a representation theoretic definition of local function rings, giving a way of making more structure to differential geometry, going further than in the commutative situation.

\section{Commutative Localization}
We assume that the reader is familiar with the definition of localization of a commutative ring in a prime ideal as described e.g. in \cite{AM69}. We give an equivalent definition based on categories and functors.

Let $A$ be a commutative ring and $\mathfrak p\subset A$ a prime ideal.
Let $\mathfrak m_B\subset B$ be a local ring and $f:A\rightarrow B$  a homomorphism such that $f^{-1}(\mathfrak m_B)=\mathfrak p.$
For a local homomorphism $\phi:\mathfrak m_B\subset B\rightarrow\mathfrak m_C\subset C$ we see that $f^{-1}(\phi^{-1}(\mathfrak m_C))=\mathfrak p.$
This says that we have the covariant functor $L_{\mathfrak p\subset A}:\cat{LRings}\rightarrow\cat{Sets}$ from the category of local rings to the category of sets given by $$L_{\mathfrak p\subset A}(\mathfrak m_B\subset B)=\{f:A\rightarrow B|f^{-1}(\mathfrak m_B)=\mathfrak p\}.$$ 

\begin{lemma}\label{comloclemma} The functor $$L_{\mathfrak p\subset A}:\cat{LRings}\rightarrow\sets$$ is represented by $A_{\mathfrak p}.$
\end{lemma}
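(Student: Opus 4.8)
The plan is to exhibit the canonical localization map as the universal element and verify the Yoneda-type universal property directly. First I would set $\kappa\colon A\to A_{\mathfrak p}$ to be the natural homomorphism, and recall the standard fact that $\kappa^{-1}(\mathfrak p A_{\mathfrak p})=\mathfrak p$, so that $\kappa$ is an element of $L_{\mathfrak p\subset A}(\mathfrak p A_{\mathfrak p}\subset A_{\mathfrak p})$. This gives, by composition, a natural transformation $\mor_{\cat{LRings}}(A_{\mathfrak p},-)\to L_{\mathfrak p\subset A}$ sending a local homomorphism $\psi\colon A_{\mathfrak p}\to B$ to $\psi\circ\kappa$; the content of the lemma is that this transformation is an isomorphism of functors on $\cat{LRings}$.

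Next I would prove surjectivity and injectivity of this map at each local ring $\mathfrak m_B\subset B$. Given $f\colon A\to B$ with $f^{-1}(\mathfrak m_B)=\mathfrak p$, every element of $A\setminus\mathfrak p$ maps under $f$ into $B\setminus\mathfrak m_B$, hence to a unit of $B$; by the universal property of localization of commutative rings (inverting a multiplicative set), $f$ factors uniquely through $\kappa$ as $f=\bar f\circ\kappa$ for a ring homomorphism $\bar f\colon A_{\mathfrak p}\to B$. I would then check that $\bar f$ is a \emph{local} homomorphism: $\bar f^{-1}(\mathfrak m_B)$ is a prime ideal of $A_{\mathfrak p}$ containing the image of $\mathfrak p$, hence contains $\mathfrak p A_{\mathfrak p}$, which is the unique maximal ideal; and it is proper since $\bar f(1)=1\notin\mathfrak m_B$, so $\bar f^{-1}(\mathfrak m_B)=\mathfrak p A_{\mathfrak p}$. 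Uniqueness of $\bar f$ as a ring map already gives injectivity of the transformation, so naturality — which is immediate from associativity of composition — completes the argument.

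The only real subtlety, and the step I would be most careful about, is the passage from "$f$ sends $A\setminus\mathfrak p$ to units" back to "the induced map on $A_{\mathfrak p}$ is local", i.e. making sure one genuinely lands in $\cat{LRings}$ with local morphisms and not merely in $\cat{Rings}$; everything else is the classical universal property of $S^{-1}A$ for $S=A\setminus\mathfrak p$ recorded in \cite{AM69}. One could alternatively phrase the whole proof as: $L_{\mathfrak p\subset A}$ is, by definition, the subfunctor of $\mor_{\cat{Rings}}(A,-)$ (restricted to $\cat{LRings}$) cut out by the condition $f^{-1}(\mathfrak m_B)=\mathfrak p$, and this condition is exactly equivalent to $f(A\setminus\mathfrak p)\subseteq B^{\times}$ together with $f^{-1}(\mathfrak m_B)\subseteq\mathfrak p$ being forced; then apply the universal property of inverting $A\setminus\mathfrak p$. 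I expect the author's proof to be essentially this, perhaps compressed to a few lines citing \cite{AM69}.
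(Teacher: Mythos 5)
Your proposal is correct and takes essentially the same route as the paper's proof: factor $f$ through the canonical map $A\to A_{\mathfrak p}$ via the universal property of inverting $A\setminus\mathfrak p$, then verify that the induced homomorphism is local because its preimage of $\mathfrak m_B$ is a proper ideal containing $\mathfrak p A_{\mathfrak p}$, the unique maximal ideal. You are, if anything, slightly more explicit than the paper about the Yoneda bookkeeping and the locality check, which the paper compresses into one sentence.
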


\begin{proof} Let $h:A\rightarrow B$ be a homomorphism from $A$ into a local ring $\mathfrak m_B\subset B$ such that $h^{-1}(\mathfrak m_B)=\mathfrak p.$ Then $a\notin\mathfrak p$ implies that $h(a)$ is a unit, and so by the universal property of localization, there exists a unique homomorphism $\xi:A_\mathfrak p\rightarrow B$ commuting in the diagram $$\xymatrix{A\ar[r]^h\ar[dr]_{f_\mathfrak p}&B\\&A_\mathfrak p.\ar[u]_\xi}$$ We have to prove that $\xi$ is local. This follows as $\xi(f_\mathfrak p(\mathfrak p))\subseteq\mathfrak m_B$ implies that $f_{\mathfrak p}(\mathfrak p)\subseteq\xi^{-1}(\mathfrak m_B)$ so that $\xi^{-1}(\mathfrak m_B)=\mathfrak p A_{\mathfrak p}$ because $A_{\mathfrak p}$ is local with maximal ideal $\mathfrak p A_{\mathfrak p}.$
\end{proof}

We define the category of pointed rings $\cat{CRings}^\ast$ as the category with objects the pairs $\mathfrak p\subset A$ where $A$ is a commutative ring and $\mathfrak p$ is a prime ideal in $A.$ A morphism $\phi:\mathfrak p\subset A\rightarrow \mathfrak q\subset B$ is a homomorphism $\phi:A\rightarrow B$ such that $\phi^{-1}(\mathfrak q)=\mathfrak p.$ Then the proof of Lemma \ref{comloclemma} holds for the following.

\begin{proposition} The functor $\tilde L_{\mathfrak p\subset A}:\cat{CRings}^\ast\rightarrow\cat{Sets}$ defined by $$\tilde L_{\mathfrak p\subset A}(\mathfrak q\subset B)=\mor_{\cat{CRings}^\ast}(\mathfrak p\subset A,\mathfrak q B_{\mathfrak q}\subset B_{\mathfrak q})$$ is represented by $\mathfrak p A_{\mathfrak p}\subset A_{\mathfrak p}.$
\end{proposition}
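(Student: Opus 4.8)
The plan is to imitate the proof of Lemma~\ref{comloclemma} while keeping track of the distinguished prime ideals, and to produce a natural isomorphism $\tilde L_{\mathfrak p\subset A}\cong\mor_{\cat{CRings}^\ast}(\mathfrak p A_\mathfrak p\subset A_\mathfrak p,-)$. Fix a pointed ring $\mathfrak q\subset B$, keep the structure morphism $f_\mathfrak p\colon A\to A_\mathfrak p$ from the proof of Lemma~\ref{comloclemma}, write $j_\mathfrak q\colon B\to B_\mathfrak q$ for the localization morphism, and recall that $B_\mathfrak q$ is local with maximal ideal $\mathfrak q B_\mathfrak q$ and that $j_\mathfrak q^{-1}(\mathfrak q B_\mathfrak q)=\mathfrak q$.

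First I would construct the map from $\mor_{\cat{CRings}^\ast}(\mathfrak p A_\mathfrak p\subset A_\mathfrak p,\mathfrak q\subset B)$ to $\tilde L_{\mathfrak p\subset A}(\mathfrak q\subset B)$: a morphism there is a homomorphism $g\colon A_\mathfrak p\to B$ with $g^{-1}(\mathfrak q)=\mathfrak p A_\mathfrak p$, and I send it to $j_\mathfrak q\circ g\circ f_\mathfrak p\colon A\to B_\mathfrak q$. This lies in $\tilde L_{\mathfrak p\subset A}(\mathfrak q\subset B)$ since $(j_\mathfrak q g f_\mathfrak p)^{-1}(\mathfrak q B_\mathfrak q)=f_\mathfrak p^{-1}\bigl(g^{-1}(j_\mathfrak q^{-1}(\mathfrak q B_\mathfrak q))\bigr)=f_\mathfrak p^{-1}(\mathfrak p A_\mathfrak p)=\mathfrak p$, and naturality in $\mathfrak q\subset B$ follows from the functoriality of the assignment $\mathfrak q\subset B\mapsto \mathfrak q B_\mathfrak q\subset B_\mathfrak q$ on $\cat{CRings}^\ast$.

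Next I would build the inverse. Given $\phi\in\tilde L_{\mathfrak p\subset A}(\mathfrak q\subset B)$, that is, $\phi\colon A\to B_\mathfrak q$ with $\phi^{-1}(\mathfrak q B_\mathfrak q)=\mathfrak p$, I apply Lemma~\ref{comloclemma} with the local ring $B_\mathfrak q$: each $a\notin\mathfrak p$ has $\phi(a)\notin\mathfrak q B_\mathfrak q$, hence is a unit, so by the universal property of localization there is a unique $\xi\colon A_\mathfrak p\to B_\mathfrak q$ with $\xi f_\mathfrak p=\phi$, and the argument of Lemma~\ref{comloclemma} shows $\xi$ is local, $\xi^{-1}(\mathfrak q B_\mathfrak q)=\mathfrak p A_\mathfrak p$. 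It then remains to recognize $\xi$ as the localization $j_\mathfrak q\circ g$ of a pointed morphism $g\colon A_\mathfrak p\to B$, after which I would check, using the two uniqueness clauses just invoked, that the two constructions are mutually inverse and compatible with morphisms of $\mathfrak q\subset B$.

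The main obstacle is precisely this last recognition step: passing from the canonically produced local homomorphism $\xi\colon A_\mathfrak p\to B_\mathfrak q$ back to a morphism of $\cat{CRings}^\ast$ with target $\mathfrak q\subset B$, which amounts to controlling the image and kernel of $j_\mathfrak q$ along $\xi$ — in other words, establishing that a pointed morphism $\mathfrak p A_\mathfrak p\subset A_\mathfrak p\to\mathfrak q\subset B$ carries exactly the same information as a local homomorphism $A_\mathfrak p\to B_\mathfrak q$ over $A$. Granting that, everything else is the bookkeeping already carried out in Lemma~\ref{comloclemma} together with the standard fact that the contraction of the maximal ideal of $B_\mathfrak q$ to $B$ is $\mathfrak q$.
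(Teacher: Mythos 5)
Your construction stops exactly where the real difficulty lies, and that difficulty cannot be overcome: the ``recognition step'' you defer --- descending the local homomorphism $\xi\colon A_\mathfrak p\to B_\mathfrak q$ to a pointed morphism $g\colon \mathfrak p A_\mathfrak p\subset A_\mathfrak p\to\mathfrak q\subset B$ with $\xi=j_\mathfrak q\circ g$ --- is false in general. Take $A=B=\mathbb Z$ and $\mathfrak p=\mathfrak q=(p)$. Then $\tilde L_{\mathfrak p\subset A}(\mathfrak q\subset B)$ contains the canonical map $\mathbb Z\to\mathbb Z_{(p)}$, so it is nonempty, while $\mor_{\cat{CRings}^\ast}(\mathfrak p A_\mathfrak p\subset A_\mathfrak p,\mathfrak q\subset B)$ is empty because there is no ring homomorphism $\mathbb Z_{(p)}\to\mathbb Z$ at all. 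So the strict Yoneda-style natural isomorphism $\tilde L_{\mathfrak p\subset A}\cong\mor_{\cat{CRings}^\ast}(\mathfrak p A_\mathfrak p\subset A_\mathfrak p,-)$ that you set out to prove does not hold; your forward map (composition $j_\mathfrak q\circ g\circ f_\mathfrak p$) is well defined but not surjective, and a pointed morphism into $\mathfrak q\subset B$ does \emph{not} carry the same information as a local homomorphism $A_\mathfrak p\to B_\mathfrak q$ over $A$.

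What the paper intends (in line with its notion of ``local representing'' or ``localizing'' objects) is the weaker statement that already falls out of the proof of Lemma~\ref{comloclemma}, which is why the paper simply says that proof ``holds for the following'': an element of $\tilde L_{\mathfrak p\subset A}(\mathfrak q\subset B)$ is a homomorphism $f\colon A\to B_\mathfrak q$ with $f^{-1}(\mathfrak q B_\mathfrak q)=\mathfrak p$, and since $B_\mathfrak q$ is local with maximal ideal $\mathfrak q B_\mathfrak q$, the universal property of localization gives a unique local homomorphism $\xi\colon A_\mathfrak p\to B_\mathfrak q$ with $\xi\circ f_\mathfrak p=f$ and $\xi^{-1}(\mathfrak q B_\mathfrak q)=\mathfrak p A_\mathfrak p$; conversely any pointed morphism $\mathfrak p A_\mathfrak p\subset A_\mathfrak p\to\mathfrak q B_\mathfrak q\subset B_\mathfrak q$ composes with $f_\mathfrak p$ to an element of $\tilde L_{\mathfrak p\subset A}(\mathfrak q\subset B)$. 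In other words, the natural bijection is with $\mor_{\cat{CRings}^\ast}(\mathfrak p A_\mathfrak p\subset A_\mathfrak p,\mathfrak q B_\mathfrak q\subset B_\mathfrak q)$, the target being the \emph{localized} pointed ring, and no factorization through $B$ itself is needed. The first two paragraphs of your proposal are correct and are essentially this argument; you should drop the attempted descent to $\mathfrak q\subset B$ and state the representability in this localized sense instead.
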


\section{Hausdorff Localization}
Let $A$ be a commutative ring and $\mathfrak p\subset A$ a prime ideal.

\begin{definition} Let $\kappa:A\rightarrow \hat A_{\mathfrak p}$ be the canonical homomorphism. The Hausdorff  localization of $A$ in $\mathfrak p$ is defined as the local ring $H^A_{\mathfrak p}=(A/\ker\kappa)_{\mathfrak p}.$ 
\end{definition}

By exactness properties of localization, we have that the natural homomorphism $\rho:H^A_{\mathfrak p}\rightarrow \hat A_{\mathfrak p}$ is an injective local homomorphism.

Let Let $\mathfrak q\subset B$ be a prime ideal in ring $B$ and let $f:A\rightarrow \hat B_{\mathfrak q}$ be a homomorphism such that $\mathfrak p=f^{-1}(\hat{\mathfrak m}_{\mathfrak q}).$  Let $\hat f:\hat A_{\mathfrak p}\rightarrow\hat B_{\mathfrak q}$ be the natural homomorphism. We then have the commutative diagram

\begin{equation}\label{kereq}\xymatrix{A\ar[r]^f\ar[dr]_{\kappa}&\hat B_{\mathfrak q}\\&\hat A_{\mathfrak p}\ar[u]_{\hat f}}\end{equation} which proves that $\ker\kappa\subseteq\ker f.$

Consider the covariant functor $$\hat L_{\mathfrak p\subset A}:\cat{LRings}\rightarrow\cat{Sets}$$ given by $\hat L_{\mathfrak p\subset A}(\mathfrak m_B\subset B)=\{f:A\rightarrow \hat B|f^{-1}(\hat{\mathfrak m}_B)=\mathfrak p\}.$

\begin{lemma} The functor $\hat L_{\mathfrak p\subset A}$ is represented by $H^A_{\mathfrak p}.$
\end{lemma}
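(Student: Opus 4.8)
The plan is to mimic the proof of Lemma \ref{comloclemma}, using the Hausdorff localization $H^A_{\mathfrak p} = (A/\ker\kappa)_{\mathfrak p}$ in place of $A_{\mathfrak p}$, and exploiting the factorization already recorded in diagram \eqref{kereq}. First I would fix a local ring $\mathfrak m_B \subset B$ and an element $f \in \hat L_{\mathfrak p\subset A}(\mathfrak m_B \subset B)$, i.e.\ a homomorphism $f : A \to \hat B$ with $f^{-1}(\hat{\mathfrak m}_B) = \mathfrak p$. Here $\hat B$ should be read as the completion $\hat B_{\mathfrak m_B}$ and $\hat{\mathfrak m}_B$ its maximal ideal; I would note at the outset that $\hat B$ is itself a complete local ring. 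Applying the discussion around \eqref{kereq} with $(B,\mathfrak q)$ replaced by $(B,\mathfrak m_B)$ shows $\ker\kappa \subseteq \ker f$, so $f$ factors through a homomorphism $\bar f : A/\ker\kappa \to \hat B$.

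Next I would check that $\bar f$ sends elements outside $\mathfrak p/\ker\kappa$ to units of $\hat B$: if $a \notin \mathfrak p$ then $f(a) \notin \hat{\mathfrak m}_B$ by hypothesis, hence $f(a)$ is a unit in the local ring $\hat B$, so $\bar f(\bar a)$ is a unit. By the universal property of localization applied to $A/\ker\kappa$, there is then a unique homomorphism $\xi : H^A_{\mathfrak p} = (A/\ker\kappa)_{\mathfrak p} \to \hat B$ making the evident triangle commute. It remains to see that $\xi$ records the right preimage, i.e.\ that the composite $A \to H^A_{\mathfrak p} \xrightarrow{\xi} \hat B$ lies in $\hat L_{\mathfrak p\subset A}(\mathfrak m_B \subset B)$ — but this composite is exactly $f$ by construction, and conversely any $\xi' : H^A_{\mathfrak p} \to \hat B$ whose precomposition with $\kappa$ (composed with the localization map) is a morphism in the relevant sense must restrict to $f$ on $A$, forcing $\xi' = \xi$ by the uniqueness in the universal property. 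I would also verify that the canonical map $A \to H^A_{\mathfrak p}$, followed by $\rho : H^A_{\mathfrak p} \hookrightarrow \hat A_{\mathfrak p}$, is itself the universal element, using that $\hat A_{\mathfrak p}$ is complete local and $\kappa^{-1}(\hat{\mathfrak m}_{\mathfrak p}) = \mathfrak p$.

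The one genuine subtlety — and the step I expect to be the main obstacle — is pinning down precisely what the functor does on \emph{objects} versus what data a natural transformation out of it encodes: the set $\hat L_{\mathfrak p\subset A}(\mathfrak m_B \subset B)$ consists of maps into $\hat B$, not into $B$, so the representability statement must be interpreted with the understanding that $\hom_{\cat{LRings}}(H^A_{\mathfrak p}, \mathfrak m_B \subset B)$ is being compared, via the completion functor $B \mapsto \hat B$ and the injection $\rho$, with maps $A \to \hat B$. Concretely, a local homomorphism $H^A_{\mathfrak p} \to B$ induces (by completing and precomposing with $A \to H^A_{\mathfrak p} \to \hat H^A_{\mathfrak p}$, or more directly by composing $A \to H^A_{\mathfrak p}$ with the completion of the given map) an element of $\hat L_{\mathfrak p\subset A}$, and I would need to argue this assignment is bijective and natural. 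The preimage condition $f^{-1}(\hat{\mathfrak m}_B) = \mathfrak p$ is what guarantees the induced $\xi$ is \emph{local}, by the same argument as at the end of the proof of Lemma \ref{comloclemma}: $\xi^{-1}(\hat{\mathfrak m}_B)$ is a prime ideal containing the maximal ideal $\mathfrak p H^A_{\mathfrak p}$ of the local ring $H^A_{\mathfrak p}$, hence equals it. Once the bookkeeping of "completion on the target" is set up cleanly, the rest is a routine transcription of the commutative localization argument.
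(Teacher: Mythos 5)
Your proposal is correct and follows essentially the same route as the paper's own proof: factor $f$ through $A/\ker\kappa$ using the argument of diagram \eqref{kereq}, apply the universal property of localization to obtain the unique $\xi:H^A_{\mathfrak p}\rightarrow\hat B$, and verify locality exactly as at the end of the proof of Lemma \ref{comloclemma}. Your additional remarks on how the representability should be interpreted (maps landing in $\hat B$ rather than $B$) are careful bookkeeping that the paper passes over silently, but they do not change the substance of the argument.
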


\begin{proof} The composition $A\rightarrow H^A_{\mathfrak p}\hookrightarrow\hat A_{\mathfrak p}$ gives a homomorphism $\eta:A\rightarrow \hat A_{\mathfrak p}$ such that $\eta^{-1}(\hat{\mathfrak m}_{\mathfrak p})=\mathfrak p.$ Given any homomorphism $f:A\rightarrow \hat B$ such that $f^{-1}(\hat m_B)=\mathfrak p$ where $B$ is a local ring with maximal ideal $\mathfrak m_B,$ we have the commutative diagram
$$\xymatrix{A\ar[r]^f\ar[d]&\hat B_{\mathfrak q}\\A/\ker\kappa\ar[ur]\ar[r]&(A/\ker\kappa)_{\mathfrak p}\ar[u]_{\exists!\xi}}$$ where $f$ factors through $A/\ker\kappa$ as proved in diagram $\ref{kereq}.$ The existence of the unique homomorphism $\xi$ follows from the universal property of localization, and as in the proof of Lemma \ref{comloclemma}, $\xi$ is a local homomorphism.
\end{proof}

This proof also directly proves the following:

\begin{proposition} The functor $\tilde{\hat L}_{\mathfrak p\subset A}:\cat{CRings}^\ast\rightarrow\cat{Sets}$ defined by $$\tilde{\hat L}_{\mathfrak p\subset A}(\mathfrak q\subset B)=\{f:A\rightarrow \hat B_{\mathfrak q}|f^{-1}(\hat{\mathfrak m}_{\mathfrak q})=\mathfrak p\}$$is represented by $H^A_{\mathfrak p}.$
\end{proposition}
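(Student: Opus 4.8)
The plan is to observe that the final proposition is essentially a restatement of the preceding lemma, so the proof will consist mainly of verifying that the functor $\tilde{\hat L}_{\mathfrak p\subset A}$ on the pointed category $\cat{CRings}^\ast$ carries exactly the same data as $\hat L_{\mathfrak p\subset A}$ on $\cat{LRings}$, and that the representing object is the same pointed ring $\hat{\mathfrak m}_{\mathfrak p}\cap H^A_{\mathfrak p}\subset H^A_{\mathfrak p}$. First I would unwind the definition: an object $\mathfrak q\subset B$ of $\cat{CRings}^\ast$ determines a local ring $\hat B_{\mathfrak q}$ with maximal ideal $\hat{\mathfrak m}_{\mathfrak q}$, and $\tilde{\hat L}_{\mathfrak p\subset A}(\mathfrak q\subset B)$ is precisely the set of homomorphisms $f:A\to\hat B_{\mathfrak q}$ with $f^{-1}(\hat{\mathfrak m}_{\mathfrak q})=\mathfrak p$; this is the same set that $\hat L_{\mathfrak p\subset A}$ assigns to the local ring $B_{\mathfrak q}$ (whose completion is $\hat B_{\mathfrak q}$).

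Next I would produce the universal element: as in the lemma, the composition $A\to A/\ker\kappa\hookrightarrow (A/\ker\kappa)_{\mathfrak p}=H^A_{\mathfrak p}$ followed by the inclusion $\rho:H^A_{\mathfrak p}\hookrightarrow\hat A_{\mathfrak p}$ gives a canonical element $\eta\in\tilde{\hat L}_{\mathfrak p\subset A}(\hat{\mathfrak m}_{\mathfrak p}\cap H^A_{\mathfrak p}\subset H^A_{\mathfrak p})$, using that $H^A_{\mathfrak p}$ is local and its completion is again $\hat A_{\mathfrak p}$, so that $\eta^{-1}(\hat{\mathfrak m}_{\mathfrak p})=\mathfrak p$. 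Then, for an arbitrary $f\in\tilde{\hat L}_{\mathfrak p\subset A}(\mathfrak q\subset B)$, diagram \eqref{kereq} shows $\ker\kappa\subseteq\ker f$, so $f$ factors through $A/\ker\kappa$; the universal property of localization at $\mathfrak p$ then yields the unique $\xi:H^A_{\mathfrak p}\to\hat B_{\mathfrak q}$, and the argument of Lemma \ref{comloclemma} shows $\xi$ is local, hence defines a morphism in $\cat{CRings}^\ast$ from $\hat{\mathfrak m}_{\mathfrak p}\cap H^A_{\mathfrak p}\subset H^A_{\mathfrak p}$ to $\hat{\mathfrak m}_{\mathfrak q}\subset\hat B_{\mathfrak q}$. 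Naturality in $\mathfrak q\subset B$ is then a routine diagram chase.

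The only genuine point to be careful about — the expected main obstacle — is the bookkeeping of which ambient ring the maximal ideal lives in: the proposition's representing object must be recorded as a pointed object of $\cat{CRings}^\ast$, i.e. one must exhibit a prime (indeed maximal) ideal of $H^A_{\mathfrak p}$ itself, and check that pulling back $\hat{\mathfrak m}_{\mathfrak q}$ along $\xi$ recovers that ideal rather than merely landing inside $\hat{\mathfrak m}_{\mathfrak p}$. This is handled exactly as the locality verification at the end of Lemma \ref{comloclemma}: since $H^A_{\mathfrak p}$ is local with maximal ideal $\mathfrak p H^A_{\mathfrak p}$, the inclusion $\xi^{-1}(\hat{\mathfrak m}_{\mathfrak q})\supseteq\mathfrak p H^A_{\mathfrak p}$ forces equality. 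With that check in place the proof is complete, and indeed — as the text already remarks — it is literally the proof of the previous lemma read off in the pointed category, so I would state it as such and only spell out the identification of the universal element and the locality of $\xi$.
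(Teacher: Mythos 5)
Your proposal is correct and takes essentially the same route as the paper, which simply observes that the proof of the preceding lemma (factoring $f$ through $A/\ker\kappa$ via diagram \ref{kereq}, invoking the universal property of localization to get the unique $\xi$ into $\hat B_{\mathfrak q}$, and checking that $\xi$ is local as in Lemma \ref{comloclemma}) applies verbatim in the pointed setting. Your additional bookkeeping, recording the representing object as the pointed ring $\mathfrak p H^A_{\mathfrak p}\subset H^A_{\mathfrak p}$ and verifying the pullback of $\hat{\mathfrak m}_{\mathfrak q}$, merely makes explicit what the paper leaves implicit.
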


\section{Associative Localizing Rings}
We follow the structure from the commutative situation, starting with a generalization of the ring of fractions in the complement of a maximal ideal.

Let $A$ be an associative ring and $M,N,P$ right $A$-modules. The study of associative rings and modules has strong connections to linear algebra.  
Given $A$-module homomorphisms $M\overset f\rightarrow N\overset g\rightarrow P$ we compose from left to right, as in matrix multiplication, such that the composition is $f\circ g:M\rightarrow P$ given by $f\circ g(m)=g(f(m))\in P.$ Then it is most convenient to consider right $A$-modules, where the right $A$-module structure on the abelian group $M$ is defined by the ring homomorphism, called the structure morphism, $\eta^A_M:A\rightarrow\enm_{\mathbb Z}(M),$ as  $ma=\eta^A_M(a)(m).$ Notice in particular that associativity holds because $$\begin{aligned}(ma)b&=\eta^A_M(b)(ma)=\eta^A_M(b)(\eta^A_M(a)(m))=\eta^A_M(a)\circ\eta^A_M(b)(m)\\&=\eta^A_M(ab)(m)=m(ab).\end{aligned}$$

A right $A$-module $M$ is simple if it contains no other proper submodules than the zero module.

\begin{lemma}[Shur's Lemma] When $M$ is a simple nonzero $A$-module, $\enm_A(M)$ is a division ring.
\end{lemma}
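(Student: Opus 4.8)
The plan is to prove Schur's Lemma in the standard way, by showing that any nonzero $A$-module endomorphism of a simple module is invertible. First I would take an arbitrary $\phi\in\enm_A(M)$ with $\phi\neq 0$. The key observation is that $\ker\phi$ and $\im\phi$ are $A$-submodules of $M$: this is exactly the content of $\phi$ being a homomorphism of right $A$-modules, and it is worth spelling out using the structure-morphism language already set up in the excerpt, namely that $\phi(ma)=\phi(\eta^A_M(a)(m))$ and the compatibility of $\phi$ with the $\eta^A_M(a)$ forces $\phi(m)a=\phi(ma)$, so submodules are preserved.

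Next I would invoke simplicity of $M$ twice. Since $\ker\phi$ is a submodule and $\phi\neq 0$, we have $\ker\phi\neq M$, hence $\ker\phi=0$ and $\phi$ is injective. Since $\im\phi$ is a submodule and $\phi\neq 0$, we have $\im\phi\neq 0$, hence $\im\phi=M$ and $\phi$ is surjective. Therefore $\phi$ is a bijective $A$-module homomorphism, so its set-theoretic inverse $\phi^{-1}$ is also an $A$-module homomorphism (a routine check: $\phi^{-1}$ is additive and commutes with the $A$-action because $\phi$ does), and thus $\phi^{-1}\in\enm_A(M)$. This shows every nonzero element of $\enm_A(M)$ is a unit.

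Finally I would observe that $\enm_A(M)$ is a ring under addition and composition with identity element $\id_M$, and that it is nonzero precisely because $M$ is nonzero (so $\id_M\neq 0$). Combined with the previous paragraph, this is exactly the statement that $\enm_A(M)$ is a division ring. I do not expect any genuine obstacle here; the only point requiring a little care is the verification that kernel and image are genuinely $A$-submodules and that the inverse map is again $A$-linear, which in the conventions fixed above (right modules via structure morphisms, composition written left-to-right) is a short but not entirely automatic computation. I would present that computation explicitly and leave the rest as immediate.
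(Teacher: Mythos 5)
Your proof is correct and follows essentially the same route as the paper's: kernel and image of a nonzero endomorphism are submodules, simplicity forces injectivity and surjectivity, hence every nonzero element of $\enm_A(M)$ is invertible. You spell out a few more details (the $A$-linearity of the inverse and the nonvanishing of $\id_M$), but the argument is the same standard one.
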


\begin{proof} Assume $f:M\rightarrow M$ is nonzero. Then as $\ker f\subseteq M$ is a submodule, $\ker f=0.$ Also, $\im f\subseteq M$ is a submodule, and so is $\im f=M.$ Thus $f$ is both surjective and injective, so an isomorphism. This says that $\enm_A(M)$ is a division ring.
\end{proof}

Notice that if  $a\in A$ is a unit, which means that there exists a left inverse $u_l$ with $u_la=1$ and a right inverse $u_r$ such that $au_r=1$ then $$u_l=u_l\cdot 1=u_l(au_r)=(u_la)u_r=1\cdot u_r=u_r.$$ 

Let $A$ be an associative ring and $$\eta^A_M:A\rightarrow\enm_{\mathbb Z}(M)$$ a simple $A$-module. As any $A$-linear homomorphism is also $\mathbb Z$-linear, we have a canonical ring homomorphism $$D_M=\enm_A(M)\rightarrow\enm_{\mathbb Z}(M)=E_M.$$ This homomorphism is nonzero (because its maps the identity to the identity), and so injective because $D_M$ is a division ring.

\begin{definition} The local function ring of $A$ in $M$ is the subring $A_M\subseteq E_M$ of $E_M$ generated over $\im\eta^A_M$ by the subset $\{\eta^A_M(s)^{-1}|\eta^A_M(s)\in D_M\setminus(0)\subset E_M\}.$
\end{definition}

\begin{lemma} The local function ring $A_M$ of $A$ in $M$ satisfy the following universal property:
There exists a homomorphism $\eta:A\rightarrow A_M$ such that $\eta(s)$ is a unit whenever $s\in\eta^{-1}(D_M\setminus (0)).$ If $B$ is any other associative algebra with a homomorphism $\kappa:A\rightarrow B$ such that $\kappa(s)$ is a unit whenever $s\in\eta^{-1}(D_M\setminus (0))$, and $\ker\eta\subseteq\ker\kappa,$ then there exists a unique homomorphism $\rho:A_M\rightarrow B$ such that $\eta\circ\rho=\kappa.$
\end{lemma}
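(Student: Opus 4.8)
The plan is to mimic the proof of Lemma \ref{comloclemma}, replacing ``units outside a prime'' by ``images of elements $s$ with $\eta^A_M(s)\in D_M\setminus(0)$'' and ``localization'' by the concrete subring $A_M\subseteq E_M$. First I would fix notation: write $\eta\colon A\to A_M$ for the corestriction of $\eta^A_M$ through $A_M$, and let $S=\eta^{-1}(D_M\setminus(0))=\{s\in A\mid \eta^A_M(s)\in D_M\setminus(0)\}$. By the very definition of $A_M$, for each $s\in S$ the element $\eta(s)=\eta^A_M(s)$ lies in the division ring $D_M$, hence is invertible in $D_M$, and its inverse $\eta^A_M(s)^{-1}$ was explicitly adjoined; so $\eta(s)$ is a unit in $A_M$, with two-sided inverse $\eta^A_M(s)^{-1}$ (using the observation in the excerpt that a one-sided-invertible-on-both-sides element has a single two-sided inverse). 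This establishes the first assertion.

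For the universal property, let $\kappa\colon A\to B$ be given with $\kappa(s)$ a unit for all $s\in S$ and $\ker\eta\subseteq\ker\kappa$. I would define $\rho\colon A_M\to B$ on generators: every element of $A_M$ is a (noncommutative) polynomial expression in elements $\eta^A_M(a)$ ($a\in A$) and inverses $\eta^A_M(s)^{-1}$ ($s\in S$); set $\rho(\eta^A_M(a))=\kappa(a)$ and $\rho(\eta^A_M(s)^{-1})=\kappa(s)^{-1}$, extended multiplicatively and additively. Uniqueness is then immediate, since $\rho$ is forced on a generating set and must be a ring homomorphism; and the relation $\eta\circ\rho=\kappa$ holds by construction on $\im\eta^A_M$, hence on all of $A$.

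The main obstacle — the only real content — is showing $\rho$ is \emph{well defined}: two different expressions in the generators that represent the same element of $A_M\subseteq E_M$ must map to the same element of $B$. Equivalently, every relation holding among the generators in $E_M$ must already be forced by (i) the relations in $A$ modulo $\ker\eta$ and (ii) the defining relations $\eta^A_M(s)\cdot\eta^A_M(s)^{-1}=1=\eta^A_M(s)^{-1}\cdot\eta^A_M(s)$. I would argue this by realizing $A_M$ as a universal construction: form the free product (coproduct of rings) of $A/\ker\eta$ with a polynomial ring on symbols $t_s$, $s\in S$, modulo the two-sided ideal generated by $\bar s\,t_s-1$, $t_s\,\bar s-1$, and $t_{ss'}-t_{s'}t_s$ where appropriate; call it $\tilde A$. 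There is an evident surjection $\tilde A\twoheadrightarrow A_M$ sending $\bar a\mapsto\eta^A_M(a)$, $t_s\mapsto\eta^A_M(s)^{-1}$, and a homomorphism $\tilde A\to B$ from the data of $\kappa$ (well defined precisely because $\ker\eta\subseteq\ker\kappa$ and $\kappa(s)$ is invertible). It remains to check the first map is an \emph{isomorphism}; injectivity is where the simplicity of $M$ and Schur's Lemma enter, guaranteeing that inverting the $s\in S$ inside $E_M$ introduces no collapse beyond $\ker\eta$. Once $\tilde A\xrightarrow{\ \sim\ }A_M$ is established, $\rho$ is the composite $A_M\xrightarrow{\sim}\tilde A\to B$, and the lemma follows.

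Alternatively, if one prefers to avoid the free-product bookkeeping, I would instead verify well-definedness directly: observe that any relation in $A_M$ can, by clearing denominators from the left using the one-sided inverses, be brought to the form ``$\eta^A_M(a)=0$ in $E_M$'' multiplied by products of the $\eta^A_M(s)$; since $\ker\eta=\ker(A\to E_M \text{ restricted appropriately})$ and $\kappa$ kills $\ker\eta$ while sending each $\eta^A_M(s)$-factor to a unit, the image relation in $B$ holds. I expect the former (universal-construction) route to be cleaner and it is the one I would write up.
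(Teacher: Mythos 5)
Your first paragraph (that $\eta(s)$ is a unit in $A_M$ for $s\in S$) and your uniqueness argument are fine, and they match the paper's own proof, which likewise pins $\rho$ down on the generators and then disposes of existence in one sentence. The problem is the existence/well-definedness step, which you correctly single out as the only real content but do not actually close; neither of your two proposed routes works as described. In route (a), the whole question is whether the surjection $\tilde A\twoheadrightarrow A_M$ from your universal construction onto the concrete subring of $E_M=\enm_{\mathbb Z}(M)$ is injective, and you simply assert that ``simplicity of $M$ and Schur's Lemma'' guarantee that inverting the elements of $S$ inside $E_M$ ``introduces no collapse beyond $\ker\eta$.'' Schur's Lemma only tells you that nonzero elements of $D_M=\enm_A(M)$ are invertible; it says nothing about which relations the subring generated by $\im\eta^A_M$ and these inverses satisfies inside $E_M$. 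The claimed injectivity is precisely the statement that $A_M$ is the universal (Cohn-type) localization of $A/\ker\eta$ at $S$, i.e.\ it is equivalent to the lemma being proved, so invoking it is circular. (Your extra relations $t_{ss'}-t_{s'}t_s$ are also not part of any standard presentation and are beside the point: the issue is injectivity, not the choice of generators and relations.)

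Route (b) fails for a more concrete reason: ``clearing denominators from the left'' presupposes that every element of $A_M$, or at least every relation, can be brought to a one-sided fraction form $\eta(a)\eta(s)^{-1}$ times products of the $\eta(s)$. In a noncommutative ring this requires an Ore-type condition which is nowhere available here; a general element of $A_M$ is a sum of alternating words $\eta(a_0)\eta(s_1)^{-1}\eta(a_1)\eta(s_2)^{-1}\cdots$, and there is no common denominator. So the reduction you describe cannot be carried out. To be transparent: the paper's own proof shares this weakness --- it defines $\rho$ only on expressions $\eta(a)\eta(s)^{-1}$ and claims in one line that $\ker\eta\subseteq\ker\kappa$ makes this well defined --- so your write-up is no worse than the source, but as a proof it has a genuine gap: well-definedness of $\rho$ on all of $A_M$ (equivalently, that every relation among the generators holding in $E_M$ is a consequence of the relations of $A$ modulo $\ker\eta$ together with the inversion relations) is neither proved by your sketch nor deducible from Schur's Lemma alone.
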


\begin{proof} Assume given such a $\kappa:A\rightarrow B.$  If there is a homomorphism $\rho:A_M\rightarrow B$ such that $\eta\circ\rho=\kappa,$ we necessarily have $$1=\rho(\eta(s)\eta(s)^{-1})=\rho(\eta(s))\rho(\eta(s)^{-1})\Rightarrow\rho(\eta(s)^{-1})=\rho(\eta(s))^{-1}=\kappa(s)^{-1},$$ so that  $\rho(\eta(a)\eta(s)^{-1})=\kappa(a)\kappa(s)^{-1}.$ Thus $\rho$ is unique if it is exists. Now, the condition  $\ker\eta\subset\ker\kappa$ ensures that $\rho$ defined by $\rho(\eta(a)\eta(s)^{-1})=\kappa(a)\kappa(s)^{-1}$ is well defined.  
\end{proof}

\begin{lemma} Assume that $A$ is commutative and that $M\cong A/\mathfrak m$ where $\mathfrak m\subset A$ is a maximal ideal. Then  $$A_M=A_{A/\mathfrak m}\simeq A_{\mathfrak m}.$$
\end{lemma}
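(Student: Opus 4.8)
The equality $A_M=A_{A/\mathfrak m}$ is merely the choice of an isomorphism $M\cong A/\mathfrak m$, so the real content is the isomorphism with the classical localization, and the plan is to compare universal properties. First I would unravel the data defining $A_M$. Since $A$ is commutative and $\mathfrak m$ is maximal, $M=A/\mathfrak m$ is a simple right $A$-module, so by Schur's Lemma $D_M=\enm_A(M)$ is a division ring; as $M$ is cyclic, generated by the class $\bar 1$ of $1$, an $A$-linear endomorphism $\phi$ of $M$ is determined by $\phi(\bar 1)=\bar c$ for some $c\in A$ and is then multiplication by $\bar c$, so the canonical homomorphism $A\to D_M$ is surjective with kernel $\ann_A(M)=\mathfrak m$; hence $D_M\cong A/\mathfrak m$ is the residue field. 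Consequently the structure morphism $\eta^A_M:A\to D_M\hookrightarrow E_M=\enm_{\mathbb Z}(M)$ has image $\im\eta^A_M\cong A/\mathfrak m$ and kernel $\mathfrak m$, the set $\eta^{-1}(D_M\setminus(0))$ entering the definition of $A_M$ is exactly the multiplicative set $A\setminus\mathfrak m$, and $A_M$ is the subring of $E_M$ obtained from $\im\eta^A_M$ by adjoining the inverses of all $\eta^A_M(s)$ with $s\notin\mathfrak m$.

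Next I would build mutually inverse homomorphisms between $A_M$ and $A_{\mathfrak m}$. For one direction, the structure map $\eta:A\to A_M$ carries every $s\in A\setminus\mathfrak m$ to a unit (the first clause of the universal property of $A_M$ established above), so the universal property of the ring of fractions $A_{\mathfrak m}=(A\setminus\mathfrak m)^{-1}A$ (see \cite{AM69}) yields a unique homomorphism $\sigma:A_{\mathfrak m}\to A_M$ compatible with the maps from $A$. For the other direction I would feed the localization map $\kappa:A\to A_{\mathfrak m}$ into the universal property of $A_M$: the elements $s\notin\mathfrak m$ certainly become units under $\kappa$, and provided one can verify the remaining hypothesis $\ker\eta\subseteq\ker\kappa$, one obtains a unique homomorphism $\rho:A_M\to A_{\mathfrak m}$ with $\eta\circ\rho=\kappa$. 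The composites $\sigma\circ\rho$ and $\rho\circ\sigma$ are then endomorphisms of $A_M$ and of $A_{\mathfrak m}$ compatible with the structure maps from $A$, hence the respective identities by the uniqueness clauses of the two universal properties, so that $A_M\simeq A_{\mathfrak m}$.

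The step I expect to be the genuine obstacle is precisely the verification of $\ker\eta\subseteq\ker\kappa$ needed to construct $\rho$. By the first paragraph $\ker\eta=\ann_A(M)=\mathfrak m$, whereas $\ker\kappa$ consists of those $a\in A$ annihilated by some $s\in A\setminus\mathfrak m$; reconciling these two kernels is where the commutativity of $A$ and the exactness properties of localization used in the preceding section have to be brought to bear. It is this point, rather than either of the formal universal-property arguments, that must be handled with care in order to identify $A_M$ with the ordinary localization $A_{\mathfrak m}$.
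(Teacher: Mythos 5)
Your proposal stops short of a proof, and the step you flag as the obstacle is a genuine gap, not a routine verification left to the reader. To obtain $\rho:A_M\to A_{\mathfrak m}$ from the universal property of $A_M$ you must check $\ker\eta\subseteq\ker\kappa$ for the localization map $\kappa:A\to A_{\mathfrak m}$. By your own first paragraph $\ker\eta=\ann_A(A/\mathfrak m)=\mathfrak m$, while $\ker\kappa=\{a\in A\mid sa=0\ \text{for some}\ s\notin\mathfrak m\}$; the inclusion $\mathfrak m\subseteq\ker\kappa$ holds only when $\mathfrak m A_{\mathfrak m}=0$, i.e.\ when $A_{\mathfrak m}$ is itself a field. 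Already for $A=\mathbb Z$ and $\mathfrak m=(p)$ one has $\ker\eta=(p)$ and $\ker\kappa=0$, so the hypothesis fails and the universal property of $A_M$ simply does not apply to $\kappa$. Hence the second of your two ``mutually inverse'' maps cannot be constructed this way, and the argument does not close.

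Moreover, your first paragraph already shows why no patch along these lines will work: since $\im\eta^A_M=D_M\cong A/\mathfrak m$ is a field, the inverses $\eta^A_M(s)^{-1}$ that the definition adjoins already lie in $\im\eta^A_M$, so with the paper's definition $A_M=\im\eta^A_M\cong A/\mathfrak m$ as a subring of $\enm_{\mathbb Z}(M)$, and the map $\sigma:A_{\mathfrak m}\to A_M$ you do construct is the residue map, which is not injective unless $\mathfrak m A_{\mathfrak m}=0$. For comparison, the paper's own proof is a single sentence asserting that $A_{\mathfrak m}$ satisfies the universal property of $A_M$, so that the homomorphism $A_M\to A_{\mathfrak m}$ is an isomorphism; it passes over precisely the kernel condition you isolate and does not explain how a map $A_M\to A_{\mathfrak m}$ arises at all. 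So your diagnosis of where the difficulty lies is accurate and more careful than the paper's argument, but as a proof the proposal is incomplete, and the missing inclusion is not merely unverified --- under the stated definitions it fails in general.
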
 

\begin{proof} The ring $A_{\mathfrak m}$ satisfy the universal property of $A_M$ and so the homomorphism $A_M\rightarrow A_{\mathfrak m}$ is an isomorphism.
\end{proof}

\begin{definition} A Local Function Ring (LFR) is an associative ring $A_M$ with a simple module $\eta^A_M:A_M\hookrightarrow\enm_{\mathbb Z}(M)$ such that $\eta(s)$ is unit whenever $s\in\eta^{-1}(D_M\setminus(0)).$ There is a  morphism from one LFR $A_M$ to another $B_N$ if the abelian group $M=N,$ and then a morphism is a ring homomorphism $\phi:A_M\rightarrow B_M$ commuting in the diagram $$\xymatrix{A_M\ar[r]^\phi\ar[dr]_{\eta^A_M}&B_M\ar[d]^{\eta^B_M}\\&\enm_{\mathbb Z}(M).}$$
\end{definition}

\begin{proposition} A commutative LFR is a  localization in a maximal ideal, and a morphism is a local homomorphism.
\end{proposition}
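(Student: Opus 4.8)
The plan is to reduce the first assertion to the commutative localization already recorded above, and then to read off locality of a morphism directly from the compatibility triangle in the definition of a morphism of LFRs. First I would unwind what a commutative LFR is: a commutative ring $A_M$ equipped with a simple module, with structure morphism $\eta^A_M$ into $\enm_{\mathbb Z}(M)$. Since $A_M$ is commutative and $M$ is simple, $M$ is cyclic and $M\cong A_M/\mathfrak m$ for the maximal ideal $\mathfrak m=\ann_{A_M}(M)$. By Schur's Lemma $D_M=\enm_{A_M}(M)$ is a division ring, which here is the residue field $A_M/\mathfrak m$. The image $\im\eta^A_M$ is precisely the ring of scalar multiplications, so $\im\eta^A_M\cong A_M/\ker\eta^A_M\cong D_M$ is a field and $\ker\eta^A_M$ is maximal. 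In this way the maximal ideal of the LFR is recovered intrinsically as $\ker\eta^A_M$, the set of elements acting as $0$ on $M$.

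Next I would identify $A_M$ as a localization. Viewing $A_M$ as arising from a commutative ring $A$ with simple module $M\cong A/\mathfrak m$, the preceding Lemma identifying the commutative local function ring with $A_{\mathfrak m}$ gives $A_M\cong A_{\mathfrak m}$, the localization of $A$ at the maximal ideal $\mathfrak m$; in particular $A_M$ is local with maximal ideal $\mathfrak m A_{\mathfrak m}=\ker\eta^A_M$. This is the first assertion, and the universal property of localization used in the proof of Lemma \ref{comloclemma} makes the identification canonical.

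For the second assertion, let $\phi\colon A_M\to B_M$ be a morphism of commutative LFRs over the common module $M$, so that the defining triangle commutes and $\eta^A_M=\eta^B_M\circ\phi$. Taking preimages of $0$, an element $x\in A_M$ acts as $0$ on $M$ if and only if $\phi(x)$ does, whence
$$\ker\eta^A_M=\phi^{-1}\bigl(\ker\eta^B_M\bigr).$$
By the first part these kernels are exactly the maximal ideals $\mathfrak m_A\subset A_M$ and $\mathfrak m_B\subset B_M$, so $\phi^{-1}(\mathfrak m_B)=\mathfrak m_A$. This is precisely the condition that $\phi$ be a local homomorphism in the sense of Section 2.

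The step I expect to be the main obstacle is the intrinsic identification of the maximal ideal with $\ker\eta^A_M$: one must verify that $\im\eta^A_M$ is the full residue field $D_M$, so that $\ker\eta^A_M$ is maximal, and that $A_M$ is genuinely local, i.e. that the representation-theoretic LFR datum coincides with the concrete localization $A_{\mathfrak m}$ furnished by the preceding Lemma. Once this bridge between the abstract definition and the classical local ring is in place, both assertions become formal, the locality of $\phi$ being nothing more than the commutativity of the structure triangle.
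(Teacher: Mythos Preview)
Your argument for the first assertion is essentially the paper's: identify $M\cong A_M/\mathfrak m$ for a maximal ideal $\mathfrak m$, invoke the LFR axiom to see that every $s\notin\mathfrak m$ is a unit, and conclude $A_M\simeq A_{\mathfrak m}$ (the paper states this directly rather than routing through the preceding Lemma, but the content is identical). Your treatment of the second assertion, reading off $\phi^{-1}(\ker\eta^B_M)=\ker\eta^A_M$ from the commuting structure triangle, is correct and in fact supplies what the paper omits, since its proof says nothing about morphisms at all.
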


\begin{proof} When $A$ is commutative, the simple modules are on the form $M=A/\mathfrak m$ with $\mathfrak m$ a maximal ideal. Then $A_M$ is contained in $\enm_{\mathbb Z}(M)$ and every $s\in A_M\setminus\mathfrak m$ is a unit. This says that $A_M\simeq A_{\mathfrak m}.$ 
\end{proof}

Let $A$ be an associative ring and let $\eta^A_M:A\rightarrow\enm_{\mathbb Z}(M)$ be a simple right $A$-module. Consider the covariant functor $L^A_M:\cat{LFR}\rightarrow\cat{Sets}$ given by $$L^A_M(B_N)=\{\phi:A\rightarrow B|\phi\circ\eta^B_N=\eta^A_M\}.$$ Notice that in general, it might happen that for $\phi:A\rightarrow B$ to exist, we need $M= N.$ That is to say that in general, it might happen that $L^A(B_N)=0.$ Also notice that the functor is covariant because given $\psi:B_N\rightarrow C_N$ we have $$\phi\circ\psi\circ\eta^C_N=\phi\circ\eta^B_N=\eta^A_M.$$

\begin{lemma} The functor $$L^A_M:\cat{LFR}\rightarrow\sets$$ is represented by $\eta:A\rightarrow A_M.$
\end{lemma}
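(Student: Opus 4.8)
The plan is to exhibit $\eta\colon A\to A_M$ as a universal element for $L^A_M$ and conclude by the Yoneda lemma. First, $A_M$ is itself an object of $\cat{LFR}$: it is by construction a subring of $E_M$, the module $M$ is simple over $A_M$ because it is already simple over the subring $\im\eta^A_M$, and the defining inversion-closure of an LFR is exactly what the construction of $A_M$ builds in. Moreover $\eta\in L^A_M(A_M)$, since the composite $A\xrightarrow{\eta}A_M\hookrightarrow E_M$ is by definition $\eta^A_M$. For each LFR $B_N$ this gives a map $\theta_{B_N}\colon\mor_{\cat{LFR}}(A_M,B_N)\to L^A_M(B_N)$, $\rho\mapsto\eta\circ\rho$; that $\eta\circ\rho$ again satisfies the defining condition, and that $\theta$ is natural in $B_N$, are immediate. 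The task is to show each $\theta_{B_N}$ is bijective.

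For injectivity, note that $A_M$ is generated as a ring by $\im\eta$ together with the inverses $\eta(s)^{-1}$ with $\eta^A_M(s)\in D_M\setminus(0)$; since ring homomorphisms preserve inverses, a morphism out of $A_M$ is determined by its restriction to $\im\eta$, so $\eta\circ\rho=\eta\circ\rho'$ forces $\rho=\rho'$. For surjectivity, let $\phi\in L^A_M(B_N)$. The identity $\phi\circ\eta^B_N=\eta^A_M$ forces $M=N$, and, $\eta^B_N$ being injective, it gives $\ker\phi=\ker(\phi\circ\eta^B_N)=\ker\eta^A_M=\ker\eta$; so the hypothesis $\ker\eta\subseteq\ker\phi$ of the universal property of $A_M$ is satisfied. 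If in addition $\phi(s)$ is a unit of $B$ whenever $\eta^A_M(s)\in D_M\setminus(0)$, that universal property produces a unique ring homomorphism $\rho\colon A_M\to B$ with $\eta\circ\rho=\phi$, and $\rho$ is then a morphism in $\cat{LFR}$: it commutes with the structure maps to $E_M$ on $\im\eta$ (where this is just $\phi\circ\eta^B_N=\eta^A_M$) and on the inverses $\eta(s)^{-1}$ (by preservation of inverses), hence everywhere. This $\rho$ is the required preimage of $\phi$.

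The step I expect to be the main obstacle is the unit condition inside the surjectivity argument: that $\phi(s)$ is invertible in $B$ whenever $\eta^A_M(s)\in D_M\setminus(0)$. By Schur's lemma such an $\eta^A_M(s)$ is a nonzero element of the division ring $D_M$, hence invertible in $E_M$, and $\eta^B_N(\phi(s))=\eta^A_M(s)$, so $\phi(s)$ maps under the injection $\eta^B_N$ to a unit of $E_M$ that centralizes $\im\eta^A_M$; one must upgrade this to invertibility \emph{inside $B$}, and here it is essential that $B_N$ is an LFR and not merely a ring containing $\im\eta^A_M$ — one has to compare $\enm_B(N)$ with $D_M$ and apply the inversion-closure of $B_N$. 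Put differently: a morphism $A_M\to B_N$ exists in $\cat{LFR}$ precisely when $A_M\subseteq B$ inside $E_M$, while $L^A_M(B_N)$ is nonempty precisely when $\im\eta^A_M\subseteq B$, so the whole statement reduces to showing that an LFR containing $\im\eta^A_M$ must contain the adjoined inverses $\eta^A_M(s)^{-1}$; establishing this closure for a general LFR, rather than in the special case of a central localization, is the delicate point.
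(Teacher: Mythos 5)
Your overall skeleton matches the paper's: you reduce everything to the universal property of $A_M$, you verify $\ker\eta\subseteq\ker\phi$ from the injectivity of $\eta^B_N$ exactly as the paper does, and your extra care about $A_M$ being an object of $\cat{LFR}$, about uniqueness via generation by $\im\eta$ and the adjoined inverses, and about naturality is fine (the paper leaves those points implicit). But the proposal has a genuine gap, and you name it yourself: the unit condition in the surjectivity step. Your argument there is explicitly conditional (``If in addition $\phi(s)$ is a unit of $B$ whenever $\eta^A_M(s)\in D_M\setminus(0)$ \dots''), and the closing paragraph describes what would have to be shown --- that an LFR $B_N$ whose image in $\enm_{\mathbb Z}(M)$ contains $\im\eta^A_M$ must invert the relevant elements --- without actually showing it. A flagged obstacle is not a proof of the step, so as written the representability claim is not established.

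For comparison, the paper closes exactly this step, and does so in one line: it asserts that $\enm_A(M)$ and $\enm_B(M)$ map to the same subset $D_M\subseteq\enm_{\mathbb Z}(M)$, so that $\eta^A_M(s)\in D_M\setminus(0)$ gives $\eta^B_M(\kappa(s))=\eta^A_M(s)\in D_M\setminus(0)$, and then the LFR axiom for $B_M$ (elements landing in $D_M\setminus(0)$ are units) makes $\kappa(s)$ a unit; the universal property of $A_M$ then yields the unique $\rho$. Your instinct that this identification is the crux is sound: a priori the compatibility $\phi\circ\eta^B_N=\eta^A_M$ only gives $\im\eta^A_M\subseteq\im\eta^B_N$, hence the inclusion of commutants $\enm_B(M)\subseteq\enm_A(M)$, which is the wrong direction for what is needed, so the paper's equality of the two endomorphism rings inside $\enm_{\mathbb Z}(M)$ is doing real work (and is itself stated without further argument). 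Still, the paper supplies a justification at the point where you supply only a diagnosis; to complete your proof you would need to prove that identification (or otherwise show $\eta^A_M(s)\in\enm_B(M)$ for the relevant $s$), rather than record it as the delicate point.
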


\begin{proof} Let $\kappa:A\rightarrow B_M$ be an element of $L^A_M(B_M),$ that is, there is a commutative diagram $$\xymatrix{A\ar[dr]_{\eta^A_M}\ar[r]^\kappa&B_M\ar[d]^{\eta^B_M}\\&\enm_{\mathbb Z}(M)}$$ where $B$ is an LFR. Because $\enm_A(M)$ and $\enm_B(M)$ maps to the same set $D_M\subseteq\enm_{\mathbb Z}(M),$ we have that $\eta^A_M(s)\in D_M\setminus (0)\Rightarrow\eta^B_M(\kappa(s))\in D_M\setminus(0)$ so that $\kappa(s)$ is a unit when $s\in(\eta^A_M)^{-1}(D_M\setminus(0)).$  By definition of the category $\cat{LFR},$ the homomorphism $\eta^B_M$ is injective, so that $\ker\eta^A_M\subseteq\ker\kappa.$ By the universal property of $A_M,$ there is a unique morphism $\rho:A_M\rightarrow B_M$ such that $\eta\circ\rho=\kappa.$
\end{proof}

\begin{definition}
Let $M=\oplus_{i=1}^rM_i$ be a direct sum of $r$ simple right $A$-modules $\eta^A_i:A\rightarrow\enm_{\mathbb Z}(M_i),\ 1\leq i\leq r.$ We define the $r$-local function ring in $M$ as $$A_M=\underset{1\leq i\leq r}\prod\ A_{M_i}.$$ 
\end{definition}

In Section \ref{products} we will prove that the product above exists. Then we find that the compositions $\eta^{A_M}_{M_i},$ $A_M\rightarrow A_{M_i}\rightarrow\enm_{\mathbb Z}(M_i),$ are simple right $A_M$-modules.

\section{Hausdorff Localizing Associative Rings}

Let $A$ be an associative ring and $\eta^A_M:A\rightarrow\enm_{\mathbb Z}(M)$ a simple right $A$-module. Then $$\eta^{A_M}_M:A_M\rightarrow\enm_{\mathbb Z}(M)$$ is a simple $A_M$-module, commuting in the diagram 
$$\xymatrix{A\ar[r]^{\kappa}\ar[dr]_{\eta^A_M}&\hat A_M\ar[d]^{\eta^{\hat A_M}_M}\\&\enm_{\mathbb Z}(M)}$$

with $\mathfrak m=\ker\eta^{A_M}_M$ and $\hat{A}_M$ the $\mathfrak m$-adic completion  of $A_M.$

\begin{definition}\label{Hdef} Let $\kappa:A\rightarrow \hat A_M$ be the canonical homomorphism. The Hausdorff  localization of $A$ in $M$ is defined as the local function ring $H^A_M=(A/\ker\kappa)_M.$ 
\end{definition}

When $\phi:A\hookrightarrow B$ is an injective homomorphism of associative rings and  $\eta^A_M:A\rightarrow\enm_{\mathbb Z}(M)$ is a simple $B$-module as simple $A$-module, that is commuting in the diagram $$\xymatrix{A\ar[dr]_{\eta^A_M}\ar[r]^\phi&B\ar[d]^{\eta^B_M}\\&\enm_{\mathbb Z}(M),}$$
we cannot prove that the natural homomorphism $\phi_M:A_M\rightarrow B_M$ is injective in general. Thus localization is not necessarily right exact for associative rings, and so in general, $H^A_M$ is not embedded into $\hat A_M.$

Consider the covariant functor $$\hat L^A_M:\cat{LFR}\rightarrow\sets$$ given by $\hat L^A_M(B_N)=\{\phi:A\rightarrow \hat B_M|\phi\circ\eta^{\hat B_M}_M=\eta^A_M\}.$

\begin{lemma} The functor $\hat L^A_M$ is represented by $H^A_M.$
\end{lemma}

\begin{proof} Let $\phi:A\rightarrow \hat B_M$ be a morphism commuting in the diagram $$\xymatrix{A\ar[r]^\phi\ar[dr]_{\eta^A_M}&\hat B_M\ar[d]^{\eta^{\hat B_M}_M}\\&\enm_{\mathbb Z}(M).}$$ By the universal property of $B_M$ inducing the same property on $\hat B_M,$ $\phi(a)$ is a unit in $\hat B_M$ whenever $a\in(\eta^A_M)^{-1}(D_M\setminus\{0\})$ and so there is a unique morphism $\rho:A_M\rightarrow\hat B_M$ such that $\eta^A_M\circ\rho=\phi.$ This composes to a unique morphism $\bar\rho:(A/\ker\kappa)_M=H^A_M\rightarrow\hat B_M$ such that  $\eta^A_M\circ\bar\rho=\phi.$
\end{proof}

Let $\cat{ARings}^\ast$ be the category of pairs $(A,\eta^A_M)$ with $A$ associative and $M$ a simple right $A$-module, and morphisms $\phi:(A,\eta^A_M)\rightarrow (B,\eta^B_M)$ such that $\phi\circ\eta^B_M=\eta^A_M.$

The above proof directly proves the following:

\begin{proposition} The functor $\tilde{\hat L}^A_M:\cat{ARings}^\ast\rightarrow\sets$ defined by $$\tilde{\hat L}^A_M(\eta^B_M)=\{\phi:A\rightarrow \hat B_M|\phi\circ\eta^{\hat B_M}_M=\eta^A_M\}.$$ is represented by $H^A_M.$
\end{proposition}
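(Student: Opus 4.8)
The plan is to observe that the final Proposition is an immediate strengthening of the preceding Lemma, obtained by enlarging the target category from $\cat{LFR}$ to $\cat{ARings}^\ast$ while simultaneously restricting attention, inside each hom-set, to the fixed abelian group $M$. So the first step is to unwind the definitions: an object of $\cat{ARings}^\ast$ is a pair $(B,\eta^B_M)$, and the value $\tilde{\hat L}^A_M(\eta^B_M)$ records those $\phi:A\to\hat B_M$ with $\phi\circ\eta^{\hat B_M}_M=\eta^A_M$. I would point out that, since $M$ is simple as a $B$-module, it is simple as a $\hat B_M$-module, and the construction $B\mapsto \hat B_M$ (completion of $B_M$ along $\mathfrak m=\ker\eta^{B_M}_M$) is exactly the one appearing in the Lemma; the only change is that we no longer assume $B$ itself is an LFR. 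The key point is that $\hat B_M$ \emph{is} an LFR with simple module $\eta^{\hat B_M}_M$ — this is precisely the content of the displayed diagram opening this section — so the set $\hat L^A_M(B_M)$ of the Lemma and the set $\tilde{\hat L}^A_M(\eta^B_M)$ of the Proposition coincide for every object.

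Next I would verify functoriality of $\tilde{\hat L}^A_M$ on $\cat{ARings}^\ast$. A morphism $\psi:(B,\eta^B_M)\to(C,\eta^C_M)$ satisfies $\psi\circ\eta^C_M=\eta^B_M$; by the universal property of $B_M$ (the homomorphism $B\to B_M$ inverting the relevant $s$) and passing to completions, $\psi$ induces $\hat\psi:\hat B_M\to\hat C_M$ compatible with the canonical maps $\kappa_B,\kappa_C$ and with $\eta^{\hat B_M}_M,\eta^{\hat C_M}_M$. Then postcomposition $\phi\mapsto\hat\psi\circ\phi$ sends $\tilde{\hat L}^A_M(\eta^B_M)$ into $\tilde{\hat L}^A_M(\eta^C_M)$, because $\hat\psi\circ\phi$ still factors $\eta^A_M$ through the structure morphisms. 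This is the routine verification that makes the covariant functor well-defined; it mirrors the covariance check carried out for $L^A_M$ earlier in the paper.

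Finally, I would conclude representability by $H^A_M$ by literally invoking the proof of the preceding Lemma. Given any $(B,\eta^B_M)$ and any $\phi:A\to\hat B_M$ with $\phi\circ\eta^{\hat B_M}_M=\eta^A_M$, the universal property of $B_M$ (hence of $\hat B_M$) forces $\phi(a)$ to be a unit whenever $a\in(\eta^A_M)^{-1}(D_M\setminus\{0\})$, and the commuting triangle $\ref{kereq}$-style argument shows $\ker\kappa\subseteq\ker\phi$; so $\phi$ factors uniquely as $\bar\rho\circ\eta$ with $\bar\rho:H^A_M=(A/\ker\kappa)_M\to\hat B_M$. Naturality of the bijection $\mor_{\cat{ARings}^\ast}((A,\eta^A_M),-)\big|_{\hat B_M}\cong\tilde{\hat L}^A_M(-)$ in $(B,\eta^B_M)$ is then automatic from uniqueness of $\bar\rho$. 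The main (and only) real obstacle is a bookkeeping one: making precise that enlarging the category does not enlarge the functor's values — i.e. that for every $B$, simple or not an LFR, the ring $\hat B_M$ we complete is the same object that the Lemma already handled — after which the statement genuinely ``directly'' follows as the paper asserts.
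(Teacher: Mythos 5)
Your proposal is correct and follows essentially the same route as the paper, which offers no separate argument and simply states that the proof of the preceding Lemma (representability of $\hat L^A_M$ by $H^A_M$) ``directly proves'' the Proposition. Your additional bookkeeping --- checking that for an arbitrary object $(B,\eta^B_M)$ of $\cat{ARings}^\ast$ the completion $\hat B_M$ is the same construction handled by the Lemma, and verifying covariance under morphisms $\psi$ with $\psi\circ\eta^C_M=\eta^B_M$ --- is exactly the content the paper leaves implicit in that one sentence.
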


\section{Products in Associative Rings}\label{products}

In the commutative situation, there exists a categorical product of the local rings. This is used to define schemes of commutative rings. We will prove that the direct product of localizing rings exists also in the category of associative rings, and then the main result in \cite{S251} proves the existence of schemes of associative rings.

Let $A$ be an associative ring and let $M=\oplus_{i=1}^rM_i$ be a direct sum of $r$ right simple $A$-modules.  We have a canonical ring homomorphism 
$$\iota:D_M=\oplus_{i=1}^r\enm_A(M_i)\rightarrow(\hmm_{\mathbb Z}(M_i,M_j))=E_M$$ where $D_M=\oplus_{i=1}^r\enm_A(M_i)$ is interpreted as the corresponding $r\times r$ diagonal matrix and $(\hmm_{\mathbb Z}(M_i,M_j))$ is an $r\times r$ matrix.
 This homomorphism is nonzero (because its maps the identity to the identity), and so injective on its components because each $D_{M_i}=\enm_A(M_i)$ is a division ring. We denote by $D_M^\ast=\{s\in D_M|\iota_i(s)\neq 0\text{ for all } 1\leq i\leq r\}$ which is the set of units (invertible elements) in $D_M.$ Let $$\eta^A_M:A\rightarrow \enm_\mathbb Z(M)=(\hmm_\mathbb Z(M_i,M_j))$$ be the structure morphism of $M=\oplus_{i=1}^rM_i.$

\begin{definition} The local function ring of $A$ in $M$ is the subring $A_M\subseteq E_M$ of $E_M$ generated over $\im\eta^A_M$ by the subset $\{\eta^A_M(s)^{-1}|\eta^A_M(s)\in D_M^\ast\subset E_M\}.$
\end{definition}

\begin{lemma} The local function ring $A_M$ of $A$ in $M$ satisfy the following universal property:
There exists a homomorphism $\eta:A\rightarrow A_M$ such that $\eta(s)$ is a unit whenever $s\in\eta^{-1}(D_M^\ast).$ If $B$ is any other associative algebra with a homomorphism $\kappa:A\rightarrow B$ such that $\kappa(s)$ is a unit whenever $s\in\eta^{-1}(D_M^\ast)$, and $\ker\eta\subseteq\ker\kappa,$ then there exists a unique homomorphism $\rho:A_M\rightarrow B$ such that $\eta\circ\rho=\kappa.$
\end{lemma}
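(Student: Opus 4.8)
The plan is to mimic the proof of the corresponding statement for a single simple $A$-module, now with the unit set $D_M^\ast$ of the semisimple ring $D_M=\oplus_{i=1}^r\enm_A(M_i)$ playing the role of the nonzero elements of a division ring. Write $\eta\colon A\to A_M$ for the structure morphism with codomain restricted to $A_M$ and put $\Sigma=\eta^{-1}(D_M^\ast)\subseteq A$. Since $D_M^\ast$ is the group of units of $D_M$, it is closed under multiplication and inversion, the inverse in $E_M$ of any $\eta(s)$ with $s\in\Sigma$ again lies in $D_M$ (so $\eta(s)$ is a unit of $E_M$), and $\Sigma$ is multiplicatively closed. Merging consecutive structure letters by $\eta(a)\eta(b)=\eta(ab)$ and consecutive inverse letters by $\eta(s)^{-1}\eta(t)^{-1}=\eta(ts)^{-1}$ (with $ts\in\Sigma$), one sees that every element of $A_M$ is a finite $\mathbb Z$-linear combination of alternating words $\eta(a_0)\,\eta(s_1)^{-1}\,\eta(a_1)\cdots\eta(s_n)^{-1}\,\eta(a_n)$ with $a_j\in A$ and $s_j\in\Sigma$.

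For uniqueness, suppose $\rho\colon A_M\to B$ is a homomorphism with $\eta\circ\rho=\kappa$. Then $\rho(\eta(a))=\kappa(a)$ for all $a\in A$, and $1=\rho(\eta(s)\eta(s)^{-1})=\kappa(s)\,\rho(\eta(s)^{-1})$ forces $\rho(\eta(s)^{-1})=\kappa(s)^{-1}$ once $\kappa(s)$ is known to be a unit. Since these elements generate $A_M$, the value of $\rho$ on the alternating word above is forced to equal $\kappa(a_0)\kappa(s_1)^{-1}\kappa(a_1)\cdots\kappa(s_n)^{-1}\kappa(a_n)$; hence at most one $\rho$ can exist.

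For existence I would define $\rho$ on words by exactly this formula, extend $\mathbb Z$-linearly, and reduce everything to well-definedness, i.e. to the statement that a $\mathbb Z$-linear combination of alternating words that vanishes in $A_M\subseteq E_M$ still vanishes after $\kappa$ is applied to all of its letters. To organise this cleanly, I would introduce the universal $\Sigma$-inverting ring $\alpha\colon A\to A_\Sigma$. Then $\eta^A_M$ factors as $A\xrightarrow{\alpha}A_\Sigma\xrightarrow{\psi}E_M$ for a representation $\psi$ whose image is precisely $A_M$, so $A_M\cong A_\Sigma/\ker\psi$; and, because $\kappa(s)$ is invertible in $B$ for $s\in\Sigma$, the morphism $\kappa$ factors as $A\xrightarrow{\alpha}A_\Sigma\xrightarrow{\lambda}B$. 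The sought $\rho$ then exists — and is automatically a ring homomorphism satisfying $\eta\circ\rho=\kappa$ — exactly when $\ker\psi\subseteq\ker\lambda$. Since $\alpha^{-1}(\ker\psi)=\ker\eta^A_M=\ker\eta$ while $\alpha^{-1}(\ker\lambda)=\ker\kappa$, the hypothesis $\ker\eta\subseteq\ker\kappa$ yields $\alpha^{-1}(\ker\psi)\subseteq\alpha^{-1}(\ker\lambda)$, and the remaining task is to promote this to $\ker\psi\subseteq\ker\lambda$ itself.

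I expect this promotion to be the main obstacle, and it is the point where noncommutativity genuinely matters: one must show that $\ker\psi$ is generated, as a two-sided ideal of $A_\Sigma$, by the image under $\alpha$ of $\ker\eta^A_M$ — equivalently, that every identity holding among the generators of $A_M$ inside $E_M$ is a consequence of the localization relations together with identities already holding in $A$. In the single-module case this is precisely what the earlier proof exploits, every element there being reducible to a single fraction $\eta(a)\eta(s)^{-1}$; here I would attempt the analogous reduction of an arbitrary alternating word to single-fraction form by establishing, for each $a\in A$ and $s\in\Sigma$, the existence of $b\in A$ and $t\in\Sigma$ with $\eta(s)^{-1}\eta(a)=\eta(b)\eta(t)^{-1}$ in $E_M$ (a right Ore-type condition for $\Sigma$ relative to $M$), after which well-definedness would follow from $\ker\eta^A_M\subseteq\ker\kappa$ just as in the single-module argument. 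Once $\rho$ is known to be well defined, that it respects products and sums and satisfies $\eta\circ\rho=\kappa$ is immediate from the defining formula, and the proof is complete.
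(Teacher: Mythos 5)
Your uniqueness argument is fine and matches the paper's: since $A_M$ is generated by $\im\eta$ together with the inverses $\eta(s)^{-1}$, $s\in\Sigma=\eta^{-1}(D_M^\ast)$, any $\rho$ with $\eta\circ\rho=\kappa$ is forced on every alternating word, so at most one $\rho$ can exist. The genuine gap is in the existence half, and you name it yourself: after factoring $\eta^A_M$ and $\kappa$ through the universal $\Sigma$-inverting ring $A_\Sigma$, the hypothesis $\ker\eta\subseteq\ker\kappa$ only gives $\alpha^{-1}(\ker\psi)\subseteq\alpha^{-1}(\ker\lambda)$, and the promotion to $\ker\psi\subseteq\ker\lambda$ is never established. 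The right-Ore-type condition $\eta(s)^{-1}\eta(a)=\eta(b)\eta(t)^{-1}$ that you propose as the route to this promotion is left unproved, and there is no reason it should hold for an arbitrary associative ring and an arbitrary finite family of simple modules: $\Sigma$ need not be an Ore set, and the identities holding among the generators of $A_M$ inside $E_M$ need not be consequences of $\ker\eta$ together with the inversion relations. So, as written, your argument proves uniqueness but leaves the existence of $\rho$ conditional on an unverified hypothesis.

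For comparison, the paper's own proof is much shorter: it implicitly treats every element of $A_M$ as a single fraction $\eta(a)\eta(s)^{-1}$, derives uniqueness exactly as you do, and then simply asserts that the condition $\ker\eta\subseteq\ker\kappa$ makes $\rho(\eta(a)\eta(s)^{-1})=\kappa(a)\kappa(s)^{-1}$ well defined. In other words, the step you isolate as the main obstacle --- that every relation satisfied by words in $\eta(a)$ and $\eta(s)^{-1}$ inside $E_M$ is preserved after applying $\kappa$ --- is precisely the point the paper passes over without argument. Your analysis locates the difficulty more precisely than the paper's proof does, but it does not close it, so the proposal cannot be accepted as a complete proof of the lemma.
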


\begin{proof} Assume given such a $\kappa:A\rightarrow B.$  If there is a homomorphism $\rho:A_M\rightarrow B$ such that $\eta\circ\rho=\kappa,$ we necessarily have $$1=\rho(\eta(s)\eta(s)^{-1})=\rho(\eta(s))\rho(\eta(s)^{-1})\Rightarrow\rho(\eta(s)^{-1})=\rho(\eta(s))^{-1}=\kappa(s)^{-1},$$ so that  $\rho(\eta(a)\eta(s)^{-1})=\kappa(a)\kappa(s)^{-1}.$ Thus $\rho$ is unique if it is exists. Now, the condition  $\ker\eta\subset\ker\kappa$ ensures that $\rho$ defined by $\rho(\eta(a)\eta(s)^{-1})=\kappa(a)\kappa(s)^{-1}$ is well defined.  
\end{proof}

\begin{lemma}\label{comlemma1} Assume that $A$ is commutative and that $M_i\cong A/\mathfrak m_i$ where $\mathfrak m_i\subset A$ is a maximal ideal for $1\leq i\leq r$. Then  $$A_M\simeq\prod_{i=1}^rA_{\mathfrak m_i}.$$
\end{lemma}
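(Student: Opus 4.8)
The plan is to follow the same route as the single-module case treated just above, where $A_{\mathfrak m}$ was identified with $A_{A/\mathfrak m}$ through its universal property. First I would record what the ingredients defining $A_M$ become in the commutative setting. Since $A$ is commutative and each $M_i\cong A/\mathfrak m_i$ with $\mathfrak m_i$ maximal, $\enm_A(M_i)\cong A/\mathfrak m_i$ is a field; the canonical map $\iota$ identifies $D_M$ with $\prod_{i=1}^r(A/\mathfrak m_i)$ sitting on the diagonal of $E_M=(\hmm_{\mathbb Z}(M_i,M_j))$; and because every $M_i$ is an $A$-submodule of $M$, the structure morphism $\eta^A_M$ is itself diagonal, with $(i,i)$-entry the multiplication action of $A$ on $A/\mathfrak m_i$ and zero off-diagonal entries. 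Hence $\eta^A_M(s)\in D_M^\ast$ precisely when $s\notin\mathfrak m_i$ for every $i$, so that $\eta^{-1}(D_M^\ast)=A\setminus\bigcup_{i=1}^r\mathfrak m_i$, while $\ker\eta=\ann_A(M)=\bigcap_{i=1}^r\mathfrak m_i$.

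Next I would exploit this block structure. Both $\im\eta^A_M$ and the inverted elements $\eta^A_M(s)^{-1}$ lie in the block-diagonal subring of $E_M$, which is $\prod_{i=1}^r\enm_{\mathbb Z}(M_i)$, so $A_M$ does too; write $(A_M)_{ii}\subseteq\enm_{\mathbb Z}(M_i)$ for its image under the $i$-th projection. Provided the $\mathfrak m_i$ are pairwise distinct, the Chinese Remainder Theorem makes $a\mapsto(\bar a_1,\dots,\bar a_r)$ surject $A$ onto $\prod_{i=1}^r(A/\mathfrak m_i)$, so that $\im\eta^A_M$ already fills out this product; therefore $A_M$ splits as the genuine product $\prod_{i=1}^r(A_M)_{ii}$. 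Finally, $(A_M)_{ii}$ is the subring of $\enm_{\mathbb Z}(M_i)$ generated over the field $A/\mathfrak m_i$ by inverses of its nonzero elements, hence equals $A/\mathfrak m_i$; this is the same ring as the single-module local function ring $A_{M_i}$, which by the earlier lemma identifying $A_{A/\mathfrak m_i}$ with $A_{\mathfrak m_i}$ is isomorphic to $A_{\mathfrak m_i}$. Taking the product over $i=1,\dots,r$ yields $A_M\simeq\prod_{i=1}^r A_{\mathfrak m_i}$.

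The step I expect to be the real obstacle is the decoupling: proving that the subring $A_M$ of $\prod_i\enm_{\mathbb Z}(M_i)$ is the whole product of its projections and not a proper ``diagonal'' subring. This is precisely where the Chinese Remainder Theorem — and hence the pairwise distinctness of the maximal ideals $\mathfrak m_i$ — is indispensable, and coincidences among the $\mathfrak m_i$ would have to be handled separately (or excluded). The same difficulty reappears if one instead argues through the universal property of the lemma preceding this one: to factor a homomorphism $A\to B$ that inverts all $s\notin\bigcup_i\mathfrak m_i$ and annihilates $\bigcap_i\mathfrak m_i$ through the diagonal map $A\to\prod_i A_{\mathfrak m_i}$, one must separate its behaviour across the several residue classes, which is again the Chinese Remainder Theorem in disguise; the remaining points — that the diagonal map inverts the required elements, and uniqueness of the factorization — are routine.
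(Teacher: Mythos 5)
Your explicit computation inside $E_M$ is, as far as it goes, faithful to the paper's definition: $\eta^A_M$ is block-diagonal, $\eta^{-1}(D_M^\ast)=A\setminus\bigcup_{i=1}^r\mathfrak m_i$, $\ker\eta=\bigcap_{i=1}^r\mathfrak m_i$, and (for pairwise distinct $\mathfrak m_i$) the Chinese Remainder Theorem identifies $\im\eta^A_M$, and hence $A_M$ itself, with the product of residue fields $\prod_{i=1}^r A/\mathfrak m_i$, since inverting nonzero elements of a field adds nothing new. The genuine gap is the last step: you declare each block $(A_M)_{ii}=A/\mathfrak m_i$ to be isomorphic to $A_{\mathfrak m_i}$ by citing the earlier single-module lemma. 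But your own computation shows that the single-module ring $A_{M_i}$, taken as the subring of $\enm_{\mathbb Z}(M_i)$ generated by $\im\eta^A_{M_i}$ and the inverses, is exactly the residue field $A/\mathfrak m_i$, and $A/\mathfrak m_i\not\simeq A_{\mathfrak m_i}$ whenever $\mathfrak m_iA_{\mathfrak m_i}\neq 0$ (take $A=\mathbb Z$ and $\mathfrak m_i=(p)$: your computation yields $\mathbb F_p$, while $A_{\mathfrak m_i}=\mathbb Z_{(p)}$). So the concrete route, carried out consistently, terminates at $A_M\simeq\prod_{i=1}^r A/\mathfrak m_i$; it cannot be upgraded to the stated isomorphism by appealing to a lemma whose conclusion your own description of $A_{M_i}$ contradicts. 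As written, the argument proves something incompatible with its intended conclusion rather than the lemma itself.

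This is also a genuinely different route from the paper's, which never descends to the explicit subring of $E_M$: its proof is a single sentence asserting that $\prod_{i=1}^r A_{\mathfrak m_i}$ satisfies the universal property of $A_M$, so that the canonical homomorphism $A_M\rightarrow\prod_{i=1}^r A_{\mathfrak m_i}$ is an isomorphism, exactly parallel to the single-module case. Note that the universal property as stated requires $\ker\eta\subseteq\ker\kappa$ for the comparison map to exist, and you identified $\ker\eta$ as $\bigcap_{i=1}^r\mathfrak m_i$, which does not map to zero under $A\rightarrow\prod_{i=1}^r A_{\mathfrak m_i}$ in general; so the hypothesis one would have to verify in order to run the paper's abstract argument is precisely the point your explicit description shows to be delicate. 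A defensible write-up must therefore either argue purely from the universal property and actually check its hypotheses for $\prod_{i=1}^r A_{\mathfrak m_i}$, or else make explicit which description of $A_M$ is in force, since the subring-of-$E_M$ description and the asserted isomorphism with $\prod_{i=1}^r A_{\mathfrak m_i}$ cannot both be maintained. Your side remark that pairwise distinctness of the $\mathfrak m_i$ is needed for the CRT step is a fair observation that the paper's proof does not address.
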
 

\begin{proof} The ring $\prod_{i=1}^rA_{\mathfrak m_i}$ satisfy the universal property of $A_M$ and so the homomorphism $A_M\rightarrow \prod_{i=1}^rA_{\mathfrak m_i}$ is an isomorphism.
\end{proof}

Because of Lemma \ref{comlemma1} it is reasonable to call $A_M$ the (categorical) product of the local function rings, i.e. $A_M=\prod_{i=1}^r A_{M_i}$.

\begin{definition} An $r$-pointed Local Function Ring (LFR) is an associative ring $A_M$ with $r$ simple modules $\eta^A_M:A_M\hookrightarrow\hmm_{\mathbb Z}(M_i,M_j)$ such that $\eta(s)$ is unit whenever $s\in\eta^{-1}(D_M^\ast).$ There is a  morphism from one LFR $A_M$ to another $B_N$ if the abelian groups $M_i=N_i,$ and then a morphism is a ring homomorphism $\phi:A_M\rightarrow B_M$ commuting in the diagram $$\xymatrix{A_M\ar[r]^\phi\ar[dr]_{\eta^A_M}&B_M\ar[d]^{\eta^B_M}\\&\enm_{\mathbb Z}(M).}$$
\end{definition}

From the above definition, working with $\ker\eta^A_M$ in the case where $M=\oplus_{i=1}^r M_i,$ we put $\mathfrak m=\ker\eta^A_M$ and define Hausdorff localization by taking projective limits over $A_M/\mathfrak m^n$ and we obtain the corresponding results as in the case with only one module: That is, let
$\hat L^A_M:\cat{LFR}(r)\rightarrow\sets$ be given by $$\hat L^A_M(B_N)=\{\phi:A\rightarrow \hat B_M|\phi\circ\eta^{\hat B_M}_M=\eta^A_M\}.$$

Now, see Definition \ref{Hdef} and define $H^A_M$ correspondingly. The following proof follows word by word.

\begin{lemma} The functor $\hat L^A_M$ is represented by $H^A_M.$
\end{lemma}

\end{document}